\numberwithin{equation}{section}
\newtheorem{theorem}{Theorem}[section]
\newtheorem{lemma}{Lemma}[section]
\newtheorem{definition}{Definition}[section]
\def\ZS{\ensuremath{\mathcal S}}
\def\ZM{\ensuremath{\mathfrak M}}
\def\ZB{\ensuremath{\mathfrak B}}
\def\ZA{\ensuremath{\mathcal A}}
\def\ZZ{\ensuremath{\mathbb Z}}
\def\ZI{\ensuremath{\mathbb I}}
\def\ZK{\ensuremath{\mathcal K}}
\def\ZR{\ensuremath{\mathbb R}}
\def\pr{\ensuremath{\mathrm {pr}}}
\def\ZG{{\mathcal G\,}}
\newcommand {\e }[1]{\eqref{#1}}
\newcommand {\lem }[1]{Lemma \ref{#1}}
\newcommand {\trm }[1]{Theorem \ref{#1}}
\begin{document}
\title[]{On a weak type estimate for sparse operators of strong type}

\author{Grigori A. Karagulyan}
\address{Faculty of Mathematics and Mechanics, Yerevan State
	University, Alex Manoogian, 1, 0025, Yerevan, Armenia} 
\email{g.karagulyan@ysu.am}

\author{Gevorg Mnatsakanyan}
\address{Faculty of Mathematics and Mechanics, Yerevan State
	University, Alex Manoogian, 1, 0025, Yerevan, Armenia} 
\email{mnatsakanyan\_g@yahoo.com}

\subjclass[2010]{42B20, 42B25, 28A25}
\keywords{Calder\'on-Zygmund operators, sparse operators, abstract measure spaces, ball-basis, weak type estimate}
\thanks{Research was  supported by a grant from Science Committee of Armenia 18T-1A081. Part of this research was carried out at the American Institute of Mathematics, during a workshop on `Sparse Domination of Singular Integrals', October 2017.} 
\date{}
\maketitle
\begin{abstract}
	We define sparse operators of strong type on abstract measure spaces with ball-bases. Weak and strong type inequalities for such operators are proved. 
\end{abstract}
\section{Introduction}
The sparse operators are very simple positive operators recently appeared in the study of weighted estimates of Calder\'{o}n-Zygmund and other related operators. It was proved that some well-known operators (Calder\'{o}n-Zygmund operators, martingale transforms, maximal function, Carleson operators, etc.) can be dominated by sparse operators, and this kind of dominations derive series of deep results for the mentioned operators \cite{CoRe, PlLe, Lac1, Kar1,  Ler1,Ler2}. In particular, Lerner's \cite{Ler1} norm domination of the Calder\'{o}n-Zygmund operators by sparse operators gave a simple alternative proof to the $A_2$-conjecture solved by Hyt\"{o}nen \cite{Hyt}. Lacey \cite {Lac1} established a pointwise sparse domination for the Calder\'{o}n-Zygmund operators with an optimal condition (Dini condition) on the modulus of continuity, getting a logarithmic gain to the result previously proved by Conde-Alonso and Rey \cite{CoRe}. The paper \cite{Lac1} also proves a pointwise sparse domination for the martingale transforms, providing a short approach to the $A_2$-theorem proved by Treil-Thiele-Volberg \cite{TTV}. 
For the Carleson operators the norms sparse domination was proved by Di Plinio and Lerner \cite{PlLe}, while the pointwise domination follows from a general result proved later in \cite{Kar1}. 
 
In this paper we consider sparse operators based on ball-bases in abstract measure spaces. The concept of ball-basis was introduced by the first author in \cite{Kar1}. Based on ball-basis the paper \cite {Kar1} defines a wide class of operators (including the above mentioned operators, in particular) that can be pointwisely dominated by sparse operators. Some estimates of sparse operators in abstract spaces were shown in \cite{Kar1}. In this paper we will define a stronger version of sparse operators. We will prove weak and strong type estimates for such operators. 

Recall the definition of the ball-basis from \cite{Kar1}.
\begin{definition} Let $(X,\ZM,\mu)$ be a measure space. A family of sets $\ZB\subset \ZM$ is said to be a ball-basis if it satisfies the following conditions: 
	\begin{enumerate}
		\item[B1)] $0<\mu(B)<\infty$ for any ball $B\in\ZB$.
		\item[B2)] For any points $x,y\in X$ there exists a ball $B\ni x,y$.
		\item[B3)] If $E\in \ZM$, then for any $\varepsilon>0$ there exists a finite or infinite sequence of balls $B_k$, $k=1,2,\ldots$, such that
		\begin{equation*}	
		\mu\left(E\bigtriangleup \bigcup_k B_k\right)<\varepsilon.
		\end{equation*} 
		\item[B4)] For any $B\in\ZB$ there is a ball $B^*\in\ZB $ (called {\rm hull} of $B$) satisfying the conditions
		\begin{align}
		&\bigcup_{A\in\ZB:\, \mu(A)\le 2\mu(B),\, A\cap B\neq\varnothing}A\subset B^*,\label{h12}\\
		&\qquad\qquad\mu(B^*)\le \ZK\mu(B),\label{h13}
		\end{align}
		where $\ZK$ is a positive constant.
	\end{enumerate}
\end{definition}
A ball-basis $\ZB$ is said to be doubling if there is a constant $\eta>1$ such that for any $A\in \ZB$, $A^{*}\neq X$, one can find a ball $B\in \ZB$ satisfying
\begin{equation}\label{h73}
A\subsetneq B,\quad \mu(B)\le\eta  \cdot \mu(A).
\end{equation}
It is shown in \cite {Kar1} that condition \e {h73} in the definition can be equivalently replaced by a stronger condition $\eta_1\le \mu(B)/\mu(A)\le \eta_2$, where $\eta_2>\eta_1>1$. It is well-known the non-standard futures of non-doubling bases in many problems of analysis.
 
One can easily check that the family of Euclidean balls in $\ZR^n$ forms a ball-basis and it is doubling. 
An example of non-doubling ball-basis can serve us the martingale-basis defined as follows: Let $(X,\ZM,\mu)$ be a measure space, and let $\{\ZB_n:\, n\in\ZZ\}$ be a collections of measurable sets such that 1) each $\ZB_n$ is a finite or countable partition of $X$, 2) for each $n$ and $A\in \ZB_n$  the set $A$ is the union of sets $A'\in \ZB_{n+1}$, 3) the collection $\ZB=\cup_{n\in\ZZ}\ZB_n$ generates the $\sigma$-algebra $\ZM$, 4) for any points $x,y\in X$ there is a set $A\in \ZB$ such that $x,y\in A$. One can easily check that $\ZB$ satisfies the ball-basis conditions B1)-B4). On the other hand it is not always doubling. Obviously, it is doubling if and only if $\mu(\pr(B))\le c\mu(B)$, $B\in \ZB$, where $\pr(B)$ (parent of $B$) denotes the minimal ball satisfying $B\subsetneq \pr(B)$.

Hence let $\ZB$ be a ball basis in a measure space $(X,\ZM,\mu)$. For $f\in L^r(X)$, $1\le r<\infty$, and a  ball $B\in \ZB$ we set 
\begin{equation*}
\langle f\rangle_{B,r}=\left(\frac{1}{\mu(B)}\int_{B}|f|^r\right)^{1/r},\quad \langle f\rangle^*_{B,r}=\sup_{A\in \ZB:A\supset B}\langle f\rangle_{A,r}.
\end{equation*}
A collection of balls $\ZS\subset \ZB$ is said to be sparse or $\gamma$-sparse if for any $B\in \ZS$ there is a set $E_B\subset B$ such that $\mu(E_B)\ge \gamma\mu(B)$ and the sets $\{E_B:\, B\in \ZS\}$ are pairwise disjoint, where  $0<\gamma<1$ is a constant.   
We associate with $\ZS$ the operators 
\begin{align*}
&\ZA_{\ZS,r}f(x)=\sum_{A\in \ZS}\left\langle f\right\rangle_{A,r}\cdot\ZI_A(x),\\
&\ZA_{\ZS,r}^*f(x)=\sum_{A\in \ZS}\langle f\rangle^*_{A,r}\cdot \ZI_{A}(x),
\end{align*}
called sparse and strong type sparse operators respectively. The weak-$L^1$ estimate of $\ZA_{\ZS,1}$ in $\ZR^n$ (case $r=1$) as well as the boundedness on $L^p$ ($1<p<\infty$)  were proved by Lerner \cite{Ler1}. The $L^p$-boundedness of $\ZA_{\ZS,r}$ for general ball-bases was shown by the first author \cite{Kar1}. 

We will say a constant is admissible if it depends only on $p$ and on the constants $\ZK$ and $\gamma$ from the above definitions, and the notation $a\lesssim b$ will stand for the inequality $a\le c \cdot b$, where $c>0$ is an admissible constant. The main result of the paper is the weak-$L^r$ estimate of $\ZA_{\ZS,r}^*$ generated by general ball-bases.  That is
\begin{theorem}\label{T1}
	A sparse operator of strong type $\ZA_{\ZS,r}^*$, $1\le r<\infty$, corresponding to a general ball-basis is bounded operator on $L^p$ for $r<p<\infty$, and satisfies the weak-$L^r$ estimate. That is
	\begin{align}
	&\left\|\ZA_{\ZS,r}^*(f)\right\|_{p}\lesssim \|f\|_p,\quad r<p<\infty, \\
	& \mu\left\{\ZA_{\ZS,r}^*(f)>\lambda  \right\}\lesssim \frac{\|f\|_r^r}{\lambda^r},\quad \lambda>0.\label{a20}
	\end{align}
\end{theorem}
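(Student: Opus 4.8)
\emph{Strategy.} The plan is to reduce both estimates to the (known) boundedness properties of the ball-basis maximal operator $Mf(x)=\sup\{\langle f\rangle_{B,1}:B\in\ZB,\ x\in B\}$, which is of weak type $(1,1)$ and bounded on $L^q$ for $1<q\le\infty$ with admissible constants; this follows from B4 by a Vitali-type covering argument and is recorded in \cite{Kar1}. Put $M_rf:=\bigl(M(|f|^r)\bigr)^{1/r}$. The one new ingredient beyond the non-starred case is the pointwise bound
\[
\langle f\rangle^*_{A,r}\le M_rf(y)\qquad\text{for every }A\in\ZB\text{ and every }y\in A,
\]
which holds because every $B\in\ZB$ with $B\supset A$ contains $y$; thus the starred average, although it is not an average over $A$, is controlled on $A$ by a fixed maximal function, which decouples it from the sum defining $\ZA_{\ZS,r}^*$. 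I will freely assume $\ZS$ finite (the constants being uniform) and $f\ge0$.

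\emph{The $L^p$ bound, $r<p<\infty$.} I would argue by duality: it suffices to estimate $\int \ZA_{\ZS,r}^*(f)\,g$ for $g\ge0$ with $\|g\|_{p'}\le1$. Expanding the operator, writing $\int_Ag=\langle g\rangle_{A,1}\mu(A)$, using the sparseness inequality $\mu(A)\le\gamma^{-1}\mu(E_A)$, and finally invoking the disjointness of the $E_A$ together with $\langle f\rangle^*_{A,r}\le M_rf$ and $\langle g\rangle_{A,1}\le Mg$ on $E_A\subset A$, one gets
\[
\int \ZA_{\ZS,r}^*(f)\,g=\sum_{A\in\ZS}\langle f\rangle^*_{A,r}\langle g\rangle_{A,1}\,\mu(A)\le\gamma^{-1}\sum_{A\in\ZS}\langle f\rangle^*_{A,r}\langle g\rangle_{A,1}\,\mu(E_A)\le\gamma^{-1}\int_X M_rf\cdot Mg.
\]
By H\"older this is $\le\gamma^{-1}\|M_rf\|_p\|Mg\|_{p'}$; since $p'>1$ we have $\|Mg\|_{p'}\lesssim\|g\|_{p'}\le1$, and since $p/r>1$ we have $\|M_rf\|_p=\|M(|f|^r)\|_{p/r}^{1/r}\lesssim\|f\|_p$, whence $\|\ZA_{\ZS,r}^*(f)\|_p\lesssim\|f\|_p$.

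\emph{The weak-$L^r$ bound.} Fix any $s\in(0,1)$. I will use the elementary remark that, for a nonnegative measurable $h$, if $\int_Eh^s\le C\mu(E)^{1-s/r}$ for every set $E$ with $0<\mu(E)<\infty$, then $\mu\{h>\lambda\}\le C^{r/s}\lambda^{-r}$ for all $\lambda>0$ — indeed, applying the hypothesis to a finite-measure $E\subset\{h>\lambda\}$ gives $\lambda^s\mu(E)\le\int_Eh^s\le C\mu(E)^{1-s/r}$, i.e.\ $\mu(E)^{s/r}\le C\lambda^{-s}$. So \e{a20} reduces to the estimate $\int_E\bigl(\ZA_{\ZS,r}^*(f)\bigr)^s\lesssim\|f\|_r^s\,\mu(E)^{1-s/r}$. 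Since $s\le1$, the subadditivity of $t\mapsto t^s$ gives $\bigl(\ZA_{\ZS,r}^*(f)\bigr)^s\le\sum_{A\in\ZS}\bigl(\langle f\rangle^*_{A,r}\bigr)^s\ZI_A$, and repeating the computation of the previous paragraph (with $g=\ZI_E$ and $s$-th powers) yields
\[
\int_E\bigl(\ZA_{\ZS,r}^*(f)\bigr)^s\le\sum_{A\in\ZS}\bigl(\langle f\rangle^*_{A,r}\bigr)^s\mu(A\cap E)\le\gamma^{-1}\sum_{A\in\ZS}\bigl(\langle f\rangle^*_{A,r}\bigr)^s\langle\ZI_E\rangle_{A,1}\,\mu(E_A)\le\gamma^{-1}\int_X(M_rf)^s\,M\ZI_E.
\]
Now I would apply the Lorentz--H\"older inequality $\int uv\lesssim_q\|u\|_{L^{q,\infty}}\|v\|_{L^{q',1}}$ with $q=r/s\in(1,\infty)$. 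For the first factor, $\|(M_rf)^s\|_{L^{r/s,\infty}}=\|M_rf\|_{L^{r,\infty}}^s=\|M(|f|^r)\|_{L^{1,\infty}}^{s/r}\lesssim\|f\|_r^s$ by the weak $(1,1)$ bound for $M$. For the second factor, since $M\ZI_E\le1$ and $M$ is of weak type $(1,1)$,
\[
\|M\ZI_E\|_{L^{q',1}}=\int_0^1\mu\{M\ZI_E>t\}^{1/q'}\,dt\lesssim\int_0^1\Bigl(\tfrac{\mu(E)}{t}\Bigr)^{1/q'}dt\lesssim\mu(E)^{1/q'}=\mu(E)^{1-s/r},
\]
the integral converging because $q'>1$. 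Multiplying the two bounds gives $\int_E\bigl(\ZA_{\ZS,r}^*(f)\bigr)^s\lesssim\|f\|_r^s\mu(E)^{1-s/r}$, which is exactly \e{a20}.

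\emph{Where the difficulty lies.} With this approach the non-doubling character of the basis enters only through the weak $(1,1)$ (and $L^q$) boundedness of $M$; once that is available, the sparseness axiom converts every relevant sum over $\ZS$ into a bilinear integral against $M_rf$ and either $Mg$ or $M\ZI_E$, and the rest is routine. So the main obstacle is essentially bookkeeping: securing the maximal-function estimates in the abstract setting (they are classical, but one must make sure the constants are admissible), and the two genuinely new twists — the pointwise domination $\langle f\rangle^*_{A,r}\le M_rf$ on $A$, and, at the endpoint, passing to the $s$-th power with $s<1$ so as to replace the unfavourable super-additivity of $t\mapsto t^{1/r}$ by the favourable sub-additivity of $t\mapsto t^{s}$.
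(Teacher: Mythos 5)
Your argument is correct, and while your $L^p$ part is essentially the paper's duality proof (the only cosmetic difference being that you bound $\langle f\rangle^*_{A,r}$ and $\langle g\rangle_{A,1}$ pointwise on the disjoint sets $E_A$ and integrate, where the paper composes maximal functions and uses discrete H\"older to arrive at $\|M_r(M_rf)\|_p\|M_1 g\|_q$), your weak-$L^r$ part takes a genuinely different route. The paper's proof goes through the flattening construction of \lem{L3}: an exceptional set $E_\lambda$ with $\mu(E_\lambda)\lesssim\|f\|_r^r/\lambda^r$ and a truncated function $g\le\lambda$ in $L^{2r}$ with $\langle f\rangle_{B,r}\lesssim\langle g\rangle_{B^*,r}$ for $B\not\subset E_\lambda$, which yields $\ZA^*_{\ZS,r}f\le\ZA^*_{\ZS,r}g$ off $E_\lambda$ and reduces weak-$L^r$ to the already-proved strong $L^{2r}$ bound; this construction (resting on \lem{L2}, \lem{L4}, \lem{L5}) is the paper's substitute for the Calder\'on--Zygmund decomposition, which can fail without doubling. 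You instead combine the Kolmogorov-type characterization of weak $L^r$, the subadditivity of $t\mapsto t^s$ for a fixed $s\in(0,1)$ applied to the sparse sum, and the $L^{q,\infty}$--$L^{q',1}$ H\"older inequality with $q=r/s$, so that the only basis-dependent input is \lem{OL3} (weak $(1,1)$ and strong $L^p$ for $M_r$), which is already available for general, possibly non-doubling, ball-bases; the non-atomicity reduction and the whole of Section 2 beyond \lem{OL3} become unnecessary. Your steps check out: $\langle f\rangle^*_{A,r}\le M_rf$ on $A$ because every $B\supset A$ contains any given point of $A$; the Lorentz computation $\|M\ZI_E\|_{L^{q',1}}\lesssim\mu(E)^{1/q'}$ converges since $q'<\infty$; and the reduction to finite $\ZS$ and to finite-measure subsets of the level set is legitimate by monotone convergence (with $\ZS$ finite the level set sits inside a finite union of balls, hence has finite measure). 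What each approach buys: yours is shorter, avoids the exceptional-set surgery entirely, and makes transparent that the strong-type average $\langle f\rangle^*_{A,r}$ causes no new difficulty once it is dominated by $M_rf$ on $A$; the paper's flattening lemma is heavier but constructive and of independent interest, since the pointwise domination $\ZA^*_{\ZS,r}f\le\ZA^*_{\ZS,r}g$ off a small set is a reusable tool (a genuine replacement for Calder\'on--Zygmund decomposition in the abstract setting), whereas your argument only delivers the distributional inequality \e{a20}.
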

The proof of $L^p$-boundedness of $\ZA_{\ZS,r}^*$ is simple and uses the duality argument likewise \cite{Ler1}. Lerner's \cite {Ler1} proof of weak-$L^1$ estimate in $\ZR^n$ applies the standard Calder\'on-Zygmund decomposition argument. The Calder\'on-Zygmund decomposition may fail if the ball-basis is not doubling, so for the weak-$L^r$ estimate in the case of general ball-basis we apply the function flattening technique displayed in \lem {L3}. That is, we reconstruct the function $f\in L^r$ round the big values to get a $\lambda$-bounded function $g\in L^{2r}$, having ball averages of $f$ dominated by those of $g$. As a result we will have $\|\ZA_{\ZS,r}^*f\|_{r,\infty}\lesssim \|\ZA_{\ZS,r}^*g\|_{2r,\infty}$, reducing the weak-$L^r$ estimate of $\ZA_{\ZS,r}^*$ to weak-$L^{2r}$.

\section{Auxiliary lemmas}
Recall some definitions and propositions from \cite{Kar1}. We say that a set $E\subset X$ is bounded if $E\subset B$ for a ball $B\in \ZB$.
\begin{lemma}[\cite{Kar1}]\label{OL1}
	Let $(X,\ZM,\mu)$ be a measure space with a ball-basis $\ZB$. If $E\subset X$ is bounded and $\ZG $ is a family of balls with
	\begin{equation*}
	E\subset \bigcup_{G\in \ZG}G,
	\end{equation*}
	then there exists a finite or infinite sequence of pairwise disjoint balls $G_k\in \ZG$ such that
	\begin{equation}\label{c1}
	E \subset \bigcup_k G_k^*.
	\end{equation}
\end{lemma}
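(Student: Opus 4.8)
The plan is to run a greedy, Vitali-type selection of pairwise disjoint balls, with the hull condition B4 playing the role of the usual ``$5r$'' dilation. The one genuine subtlety is that the measures $\mu(G)$, $G\in\ZG$, need not be uniformly bounded, so the naive ``select a ball of near-maximal measure'' step may not even make sense; I would dispose of this obstruction first, exploiting that $E$ is bounded.

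First I would reduce the cover. Points of $E$ are covered only by balls meeting $E$, so every $G\in\ZG$ with $G\cap E=\varnothing$ may be discarded without destroying the covering hypothesis; I may also assume $E\neq\varnothing$ and fix, by boundedness, a ball $B_0\in\ZB$ with $E\subset B_0$. Now a single large ball may already suffice: if some $G\in\ZG$ has $\mu(G)\ge\mu(B_0)/2$, then $B_0\cap G\supset E\cap G\neq\varnothing$ while $\mu(B_0)\le 2\mu(G)$, so the inclusion \e{h12} applied to $G$ gives $B_0\subset G^*$, whence $E\subset G^*$ and the single-term sequence $G_1=G$ already works. Thus I may assume $\mu(G)<\mu(B_0)/2$ for every $G\in\ZG$; in particular $M:=\sup_{G\in\ZG}\mu(G)<\infty$, which is exactly what is needed to launch the greedy process.

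Next I would perform the selection. Set $\ZG_1=\ZG$ and, having chosen disjoint $G_1,\dots,G_{k-1}$, let $\ZG_k=\{G\in\ZG:\ G\cap G_j=\varnothing,\ j<k\}$; if $\ZG_k=\varnothing$ I stop, and otherwise I pick $G_k\in\ZG_k$ with $\mu(G_k)>\tfrac12\sup_{G\in\ZG_k}\mu(G)$, which is legitimate since the supremum is finite and positive. By construction the $G_k$ are pairwise disjoint, and each meets $E\subset B_0$; since moreover $\mu(G_k)<2\mu(B_0)$, the inclusion \e{h12} applied to $B_0$ forces $G_k\subset B_0^*$ for every $k$. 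Disjointness together with the measure bound \e{h13} then yields $\sum_k\mu(G_k)\le\mu(B_0^*)\le\ZK\mu(B_0)<\infty$, so $\mu(G_k)\to 0$ whenever the selection is infinite.

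Finally I would verify $E\subset\bigcup_k G_k^*$. Fix $x\in E$ and a ball $G\in\ZG$ with $x\in G$; it suffices to show $G$ meets some selected ball. If not, then $G$ is disjoint from all $G_k$, hence survives in every $\ZG_k$: in the finite case this makes the alleged terminal family $\ZG_{m+1}$ nonempty, a contradiction, while in the infinite case $G\in\ZG_k$ for all $k$ gives $\mu(G_k)>\mu(G)/2>0$, contradicting $\mu(G_k)\to 0$. So $G$ meets some selected ball; taking the first index $k$ with $G\cap G_k\neq\varnothing$, we have $G\in\ZG_k$, hence $\mu(G)\le\sup_{\ZG_k}\mu<2\mu(G_k)$, and \e{h12} applied to $G_k$ gives $G\subset G_k^*$, so $x\in G_k^*$. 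I expect the only real difficulty to be the handling of the possibly infinite supremum of measures; once the boundedness of $E$ is used — via $B_0$ — to either finish in one ball or bound all relevant measures, the remainder is the standard greedy selection with the hull substituting for the Euclidean dilate.
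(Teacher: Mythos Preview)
Your argument is correct. Note, however, that the present paper does not actually prove this lemma: it is quoted from \cite{Kar1} and stated here without proof, so there is no proof in this paper to compare against. Your approach---a greedy Vitali-type selection, with the hull property B4) playing the role of the Euclidean ``$5r$'' dilation, preceded by a reduction (via the boundedness of $E$ and a containing ball $B_0$) that either finishes in a single ball or guarantees $\sup_{G\in\ZG}\mu(G)<\infty$---is the natural one in this setting and is carried out cleanly.
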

\begin{definition}
	For a set $E\in \ZM $ a point $x\in E$ is said to be density point if for any $\varepsilon>0$ there exists a ball $B\ni x$ such that
	\begin{equation*}
	\mu(B\cap E)>(1-\varepsilon )\mu(B).
	\end{equation*} 
	We say a measure space $(X,\ZM,\mu)$ satisfies the density property if almost all points of any measurable set are density points. 
\end{definition}

\begin{lemma}[\cite{Kar1}]\label{OL2}
	Any ball-basis satisfies the density property.
\end{lemma}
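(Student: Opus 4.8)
The plan is to fix a measurable set $E$ and a parameter $\varepsilon\in(0,1)$, and to prove that the set of points of $E$ failing the density condition at scale $\varepsilon$,
\[
N_\varepsilon=\{x\in E:\ \mu(B\cap E)\le(1-\varepsilon)\mu(B)\ \text{for all } B\in\ZB \text{ with } x\in B\},
\]
is null. Letting $\varepsilon$ range over $1/n$ and taking a countable union then shows that almost every point of $E$ is a density point, which is exactly the density property.

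First I would reduce to bounded $E$. By B3 there are countably many balls whose union covers $E$ up to a null set, so $E=\bigcup_k(E\cap B_k)$ modulo a null set for suitable balls $B_k$. Since a density point of $E\cap B_k$ is automatically a density point of $E$ (enlarging the set only increases the averages $\mu(B\cap E)$), and since almost every point of $E$ lies in some $E\cap B_k$, it suffices to treat each bounded piece. Hence I assume $E\subset B_0$ for a fixed ball $B_0$, so that $N_\varepsilon$ is bounded.

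The heart of the matter is a Vitali-type covering argument. Fix $\delta>0$ and use B3 to choose balls $Q_i$ with $U:=\bigcup_i Q_i$ satisfying $\mu(E\triangle U)<\delta$; in particular $\mu(U\setminus E)<\delta$. The decisive observation is that every $Q_i$ meeting $N_\varepsilon$ already has a large complement: if $x\in Q_i\cap N_\varepsilon$, then since $x\in N_\varepsilon$ and $Q_i\ni x$ we get $\mu(Q_i\setminus E)\ge\varepsilon\mu(Q_i)$. The family $\{Q_i:\ Q_i\cap N_\varepsilon\neq\varnothing\}$ covers the bounded set $N_\varepsilon\cap U$, so \lem{OL1} yields a pairwise disjoint subsequence $Q_{i_1},Q_{i_2},\dots$ with $N_\varepsilon\cap U\subset\bigcup_k Q_{i_k}^*$. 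Using the hull bound \eqref{h13}, then the complement estimate, and finally disjointness,
\[
\mu(N_\varepsilon\cap U)\le\sum_k\mu\big(Q_{i_k}^*\big)\le\ZK\sum_k\mu(Q_{i_k})\le\frac{\ZK}{\varepsilon}\sum_k\mu(Q_{i_k}\setminus E)\le\frac{\ZK}{\varepsilon}\,\mu(U\setminus E)<\frac{\ZK}{\varepsilon}\,\delta.
\]
Combining this with $\mu(N_\varepsilon\setminus U)\le\mu(E\setminus U)<\delta$ gives $\mu(N_\varepsilon)<(1+\ZK/\varepsilon)\delta$, and letting $\delta\to0$ forces $\mu(N_\varepsilon)=0$.

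I expect the main obstacle to be the absence of small balls: the basis need not contain balls shrinking to a given point, and large balls through $x\in N_\varepsilon$ have large complement for trivial reasons, so summing $\mu(Q\setminus E)$ over arbitrary disjoint covering balls would only be bounded by $\mu(E^c)$, which may be infinite and hence useless. The trick that removes this difficulty is to cover $N_\varepsilon$ not by arbitrary balls through its points but by the \emph{approximating} balls $Q_i\subset U$ themselves; this forces the disjoint complement pieces $Q_{i_k}\setminus E$ to sit inside the small set $U\setminus E$, so their total measure is controlled by $\delta$. Measurability of $N_\varepsilon$ is not an issue, since the entire argument may be run with outer measure, the final bound $\mu^*(N_\varepsilon)=0$ yielding both measurability and nullity.
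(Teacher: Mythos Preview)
The paper does not actually prove this lemma; it merely quotes it from \cite{Kar1} without argument, so there is no in-paper proof to compare against. That said, your argument is correct and is essentially the standard route to a density theorem once one has a Vitali-type covering lemma: approximate $E$ from the outside by balls via B3, note that any approximating ball hitting $N_\varepsilon$ must have an $\varepsilon$-fraction of its mass outside $E$, extract a disjoint subfamily via \lem{OL1}, and sum. The key insight you correctly isolate is that one covers $N_\varepsilon$ by the \emph{approximating} balls rather than by arbitrary balls through its points, which confines the disjoint complement pieces to the small set $U\setminus E$. Your reduction to bounded $E$ and your remark on outer measure handling the measurability of $N_\varepsilon$ are both appropriate. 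One cosmetic point: in Step~6 you apply \lem{OL1} to the bounded set $N_\varepsilon\cap U$, which is fine since $N_\varepsilon\subset E\subset B_0$; you might state explicitly that the disjoint $Q_{i_k}$ all lie in $U$, which is what makes $\sum_k\mu(Q_{i_k}\setminus E)\le\mu(U\setminus E)$ valid.
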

Denote the $L^r$ maximal function associated to the ball-basis $\ZB$ by
\begin{equation*}
M_rf(x)=\sup_{B\in \ZB:\,x\in B }\langle f\rangle_{B,r}
\end{equation*}
\begin{lemma}[\cite{Kar1}]\label{OL3}
	If $1\le r<p\le \infty$, then the maximal function $M_r$ satisfies the strong $L^p$ and weak-$L^r$ inequalities.
\end{lemma}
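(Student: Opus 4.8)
\textbf{Plan for the proof of \lem{OL3}.}
The plan is to prove both assertions by reduction to the case $r=1$, where $M_1$ is the ordinary Hardy--Littlewood-type maximal operator associated to the ball-basis. First I would record the elementary pointwise identity
\begin{equation*}
M_r f(x)=\bigl(M_1(|f|^r)(x)\bigr)^{1/r},
\end{equation*}
which follows immediately from the definition, since $\langle f\rangle_{B,r}^r=\langle |f|^r\rangle_{B,1}$ for every ball $B\ni x$ and taking the supremum commutes with the increasing map $t\mapsto t^{1/r}$. With this in hand, the strong $L^p$ bound for $M_r$ with $r<p$ is equivalent to the strong $L^{p/r}$ bound for $M_1$ with exponent $p/r>1$, and the weak-$L^r$ bound for $M_r$ is equivalent to the weak-$L^1$ bound for $M_1$ applied to $|f|^r\in L^1$. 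Thus the whole lemma reduces to two facts about $M_1$: its weak-$(1,1)$ inequality and its strong $L^q$ boundedness for $1<q\le\infty$.

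The heart of the matter is therefore the weak-$(1,1)$ estimate for $M_1$, and this is where I expect the main obstacle to lie, precisely because the ball-basis need not be doubling so the classical Vitali covering argument is unavailable in its usual form. The key tool replacing it is \lem{OL1} (the covering lemma for bounded sets using the hulls $G^*$). Fix $\lambda>0$ and set $\Omega=\{M_1 f>\lambda\}$. For each $x\in\Omega$ there is a ball $B_x\ni x$ with $\int_{B_x}|f|>\lambda\mu(B_x)$, i.e. $\mu(B_x)<\lambda^{-1}\int_{B_x}|f|\le\lambda^{-1}\|f\|_1$; so the selected balls have uniformly bounded measure. The difficulty is that $\Omega$ itself may fail to be bounded, so \lem{OL1} does not apply to it directly. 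I would handle this by exhausting $\Omega$ from inside by bounded measurable subsets $E$ (using B2)--B3) to produce, for any prescribed $E\subset\Omega$ of finite measure, a bounded set containing it up to a small error), applying \lem{OL1} to each such $E$ with the covering $\{B_x\}_{x\in E}$ to extract pairwise disjoint balls $G_k$ with $E\subset\bigcup_k G_k^*$, and then estimating
\begin{equation*}
\mu(E)\le\sum_k\mu(G_k^*)\le\ZK\sum_k\mu(G_k)\le\frac{\ZK}{\lambda}\sum_k\int_{G_k}|f|\le\frac{\ZK}{\lambda}\|f\|_1,
\end{equation*}
where the second inequality is the hull bound \e{h13} and the third uses the defining property of the selected balls together with their disjointness. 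Taking the supremum over all such bounded $E\subset\Omega$ yields $\mu(\Omega)\le\ZK\lambda^{-1}\|f\|_1$, which is the weak-$(1,1)$ bound with admissible constant $\ZK$.

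For the strong $L^q$ bound of $M_1$ with $1<q\le\infty$, the $q=\infty$ case is trivial since $M_1 f\le\|f\|_\infty$ from the averages, and the intermediate range follows from the weak-$(1,1)$ estimate just proved together with the trivial $L^\infty$ bound by the Marcinkiewicz interpolation theorem, noting that $M_1$ is sublinear. Unwinding the substitution $|f|^r\mapsto f$ then gives the strong $L^p$ inequality for $M_r$ in the full range $r<p\le\infty$ and the weak-$L^r$ inequality for $M_r$, all with admissible constants depending only on $\ZK$ (and $p,r$ through the interpolation). The only genuinely nontrivial point is the covering-lemma substitute for doubling in the weak-$(1,1)$ step; everything else is the standard $|f|\mapsto|f|^r$ rescaling plus Marcinkiewicz interpolation.
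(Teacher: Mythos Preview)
The paper does not give a proof of this lemma; it is quoted from \cite{Kar1} as a known result, so there is nothing in the present paper to compare against. Your plan is the standard one and is correct: the pointwise identity $M_r f=(M_1|f|^r)^{1/r}$ reduces everything to the weak-$(1,1)$ bound for $M_1$, that bound is obtained from the covering \lem{OL1} together with the hull estimate \e{h13}, and the strong bounds then follow by Marcinkiewicz interpolation with the trivial $L^\infty$ endpoint. This is exactly the argument one expects in \cite{Kar1}.

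One small point worth tightening: your exhaustion of $\Omega=\{M_1f>\lambda\}$ by bounded subsets via B2)--B3) is slightly informal, since B3) gives approximation by countable unions of balls and it is not immediate that a finite union of balls is itself bounded. A cleaner route is to fix any single ball $B_0$, apply \lem{OL1} to the bounded set $\Omega\cap B_0$, obtain $\mu(\Omega\cap B_0)\le\ZK\lambda^{-1}\|f\|_1$ uniformly in $B_0$, and then let $B_0$ increase (in \cite{Kar1} it is shown that either $X$ is a single ball or there is an increasing sequence of balls exhausting $X$). Alternatively, first prove the bound for boundedly supported $f$ --- where $\Omega$ is automatically bounded, as the paper itself uses in \lem{L2} --- and extend by monotone convergence. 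Either way the obstacle you flagged is genuine but routine to overcome.
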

 \begin{definition}
 	We say $B\in\ZB$ is a $\lambda$-ball for a function $f\in L^r(X)$ if 
 	\begin{equation*}
 	\langle f\rangle_{B,r}> \lambda.
 	\end{equation*}
 	If, in addition, there is no $\lambda$-ball $A\supset B$ satisfying $\mu(A)\ge 2\mu(B)$,
 	then $B$ is said to be maximal $\lambda$-ball for $f$.
 \end{definition}

 \begin{lemma}\label{L2}
 	Let the function $f\in L^r(X)$ have bounded support and $\lambda>0$. There exist pairwise disjoint maximal $\lambda$-balls $\{B_k\}$ such that
 	\begin{equation}\label{a1}
 	G_\lambda= \{x\in X:\,M_rf(x)>\lambda\} \subset  \bigcup_kB_k^*.
 	\end{equation}
 \end{lemma}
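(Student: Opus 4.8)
The plan is to use that $G_\lambda$ is precisely the union of all $\lambda$-balls of $f$, to cover it more economically by the subfamily of \emph{maximal} $\lambda$-balls, and then to thin this subfamily to a pairwise disjoint one whose hulls still cover $G_\lambda$, invoking the covering lemma \lem{OL1}. Since \lem{OL1} applies to bounded sets, a preliminary step is to check that $G_\lambda$ is bounded.

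I would first note that every $\lambda$-ball $B$ has $\mu(B)<m:=(\|f\|_r/\lambda)^r$, because $\lambda^r<\langle f\rangle_{B,r}^r=\mu(B)^{-1}\int_B|f|^r\le\mu(B)^{-1}\|f\|_r^r$; thus the measures of $\lambda$-balls are uniformly bounded (only $f\in L^r$ is used here). Given a $\lambda$-ball $B$, set $t=\sup\{\mu(A):A\text{ a }\lambda\text{-ball},\ A\supset B\}\le m<\infty$ and pick a $\lambda$-ball $A^\sharp\supset B$ with $\mu(A^\sharp)>t/2$. Then $A^\sharp$ is maximal, for a $\lambda$-ball $A'\supset A^\sharp$ with $\mu(A')\ge 2\mu(A^\sharp)$ would be a $\lambda$-ball containing $B$ of measure exceeding $t$. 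Hence every $\lambda$-ball --- and therefore every point of $G_\lambda$ --- lies in some maximal $\lambda$-ball, so $G_\lambda$ is covered by the family $\ZG$ of all maximal $\lambda$-balls.

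The step I expect to be the main obstacle is the boundedness of $G_\lambda$, and here I would finally use that $\supp f$ is bounded. Fix a ball $B_0\supset\supp f$. Every $\lambda$-ball meets $B_0$, since $f$ vanishes a.e. off $B_0$ while the $L^r$ average of $f$ over the ball is positive. If no $\lambda$-ball exists then $G_\lambda=\varnothing$ and the lemma is trivial; otherwise put $s=\sup\{\mu(B):B\text{ a }\lambda\text{-ball}\}\le m$ and choose a $\lambda$-ball $B^\sharp$ with $\mu(B^\sharp)>s/2$, so that $\mu(B)<2\mu(B^\sharp)$ for every $\lambda$-ball $B$. If $\mu(B_0)\ge\mu(B^\sharp)$, then $\mu(B)<2\mu(B_0)$ and $B\cap B_0\neq\varnothing$ for every $\lambda$-ball $B$, so \e{h12} yields $B\subset B_0^*$ and hence $G_\lambda\subset B_0^*$. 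If instead $\mu(B_0)<\mu(B^\sharp)$, then \e{h12} applied to $B^\sharp$ (using $\mu(B_0)\le2\mu(B^\sharp)$ and $B_0\cap B^\sharp\neq\varnothing$) gives $B_0\cup B^\sharp\subset(B^\sharp)^*=:D$, with $\mu(D)\le\ZK\mu(B^\sharp)$ by \e{h13}; then any $\lambda$-ball $B$ satisfies $\mu(B)<2\mu(B^\sharp)\le2\mu(D)$ and $B\cap D\supset B\cap B_0\neq\varnothing$, so \e{h12} gives $B\subset D^*$ and $G_\lambda\subset D^*$. In either case $G_\lambda$ lies inside a single ball.

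Finally I would apply \lem{OL1} to the bounded set $G_\lambda$ and its covering $\ZG$ by maximal $\lambda$-balls, obtaining pairwise disjoint balls $B_k\in\ZG$ with $G_\lambda\subset\bigcup_kB_k^*$; as each $B_k$ is a maximal $\lambda$-ball this is exactly \e{a1}. The whole difficulty is concentrated in the boundedness of $G_\lambda$; the remaining steps are the selection-by-near-supremum device together with the hull axiom B4.
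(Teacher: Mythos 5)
Your proof is correct and follows the same route as the paper: cover $G_\lambda$ by maximal $\lambda$-balls and thin the cover via \lem{OL1}. The only difference is that you supply the details (uniform bound $\mu(B)<(\|f\|_r/\lambda)^r$ on $\lambda$-balls, the near-supremum selection of a maximal $\lambda$-ball, and the B4-based argument that $G_\lambda$ sits inside a single hull) which the paper dismisses with ``one can easily check.''
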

\begin{proof}  
	Since $f$ has bounded support, one can easily check that the set $G_\lambda$
is also bounded. Besides, any $\lambda$-ball is in some maximal $\lambda$-ball. Thus we conclude that $G_\lambda=\bigcup_\alpha B_\alpha$, where each $B_\alpha$ is a maximal $\lambda$-ball. Applying the above covering lemma, we find a sequence of pairwise disjoint balls $B_k$
such that 
\begin{equation*}
G_\lambda \subset \bigcup_kB_k^*
\end{equation*}
and so we will have \e {a1}. 
\end{proof}
Let $B\subset (a,b)$ be a Lebesgue measurable set. For a given positive real $\kappa\le |B|$ denote  
\begin{equation*}
a(\kappa, B)=\inf \{a':\, |(a,a')\cap B)|\ge \kappa\},\quad L(\kappa,B)=(a,a(\kappa,G))\cap B.
\end{equation*}
Observe that $L(\kappa,B)$ determines the "leftmost" set of measure $\kappa$ in $B$ and $a(\kappa, B)$ does not depend on the choice of $a$.  
\begin{lemma}\label{L4}
Let $A\subset B\subset (a,b)$ be Lebesgue measurable sets on the real line and $0<\kappa \le |A|$. Then we have
\begin{equation*}
|L(\kappa,B)\triangle L(\kappa, A)|\le 2|B\setminus A|.
\end{equation*}
\end{lemma}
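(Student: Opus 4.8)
The plan is to reduce the statement to a comparison of the two ``cut points''. Write $\beta=a(\kappa,B)$ and $\alpha=a(\kappa,A)$, so that $L(\kappa,B)=(a,\beta)\cap B$ and $L(\kappa,A)=(a,\alpha)\cap A$. First I would record the elementary one-dimensional regularity facts: the map $a'\mapsto |(a,a')\cap B|$ is continuous, nondecreasing, vanishes at $a'=a$, and equals $|B|\ge|A|\ge\kappa$ at $a'=b$; hence the infimum defining $\beta$ is attained, $|L(\kappa,B)|=\kappa$, and likewise $|L(\kappa,A)|=\kappa$. (The difference between open and half-open intervals at the endpoint $\beta$ is irrelevant, being of measure zero.)

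The second step is the ordering $\beta\le\alpha$. Since $A\subset B$, we have $|(a,a')\cap A|\le |(a,a')\cap B|$ for every $a'$, so $\{a':\,|(a,a')\cap A|\ge\kappa\}\subset\{a':\,|(a,a')\cap B|\ge\kappa\}$; taking infima of the two sets reverses the inclusion and yields $\beta\le\alpha$.

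The third step is to identify the two halves of the symmetric difference explicitly. Using $A\subset B$ together with $\beta\le\alpha$, a direct case check gives
\begin{equation*}
L(\kappa,A)\setminus L(\kappa,B)=A\cap[\beta,\alpha),\qquad L(\kappa,B)\setminus L(\kappa,A)=(B\setminus A)\cap(a,\beta).
\end{equation*}
Now I would write the two ``mass $=\kappa$'' identities via the partition of $B$ into $A$ and $B\setminus A$: from $|L(\kappa,B)|=\kappa$ and $|L(\kappa,A)|=\kappa$ and $(a,\alpha)=(a,\beta)\cup[\beta,\alpha)$ one gets
\begin{equation*}
|(a,\beta)\cap A|+|(a,\beta)\cap(B\setminus A)|=\kappa=|(a,\beta)\cap A|+|[\beta,\alpha)\cap A|,
\end{equation*}
hence $|(a,\beta)\cap(B\setminus A)|=|[\beta,\alpha)\cap A|$. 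Combining with the description of the symmetric difference,
\begin{equation*}
|L(\kappa,B)\triangle L(\kappa,A)|=|A\cap[\beta,\alpha)|+|(B\setminus A)\cap(a,\beta)|=2\,|(B\setminus A)\cap(a,\beta)|\le 2\,|B\setminus A|.
\end{equation*}

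I do not expect a genuine obstacle: once the monotone‑rearrangement picture is set up the argument is bookkeeping. The only points needing a word of care are the attainment of the infimum (continuity of $a'\mapsto|(a,a')\cap B|$) and the fact that both leftmost sets have measure exactly $\kappa$, which is precisely what powers the cancellation in the last step.
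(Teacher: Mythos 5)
Your proof is correct and follows essentially the same route as the paper's: order the two cut points, identify $L(\kappa,B)\setminus L(\kappa,A)$ with $(a,a(\kappa,B))\cap(B\setminus A)$ and $L(\kappa,A)\setminus L(\kappa,B)$ with $A\cap[a(\kappa,B),a(\kappa,A))$, and use $|L(\kappa,B)|=|L(\kappa,A)|=\kappa$ to see these two pieces have equal measure, so the symmetric difference is at most $2|B\setminus A|$. Your write-up just makes explicit the continuity/attainment points that the paper leaves implicit.
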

\begin{proof}
 Obviously, $a\le a(\kappa,B)\le a(\kappa,A)\le b$. Since $|L(\kappa,B)|=|L(\kappa,B)|$, the sets
	\begin{align*}
	&L(\kappa,B)\setminus L(\kappa, A)=\big((a,a(\kappa,B))\cap (B\setminus A) \big),\\
	&L(\kappa,A)\setminus L(\kappa, B)=\big((a(\kappa,B),a(\kappa,A))\cap A\big).
	\end{align*}
have the same measure. So we get
	\begin{equation*}
	|L(\kappa,B)\triangle L(\kappa, A)|=2\left|\big((a,a(\kappa,B))\cap (B\setminus A) \big)\right|\le 2|B\setminus A|.
	\end{equation*}
\end{proof}
\begin{lemma}\label{L5}
	Let $(X,\ZM,\mu)$ be a non-atomic measure space and $G_k$ be a finite or infinite sequence of measurable sets in $X$. If a sequence of numbers $\xi_k\ge 0$ satisfies $\sum_k\xi_k<\infty$ and the condition 
	\begin{equation}\label{a36}
	 \bigcup_{j:\, \mu(G_j)\le \mu(G_k),\,G_j\cap G_k\neq\varnothing}\xi_j\le \mu(G_k),\quad k=1,2,\ldots,
	\end{equation} 
	then there exist pairwise disjoint measurable sets $\tilde G_k\subset G_k$ such that 
	\begin{equation}\label{a37}
	\mu(\tilde G_k)=\xi_k, k=1,2,\ldots .
	\end{equation}
	
\end{lemma}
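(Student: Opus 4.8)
The plan is to select the sets $\tilde G_k$ greedily, treating the $G_k$ of smaller measure with priority, and then to pass from finite to infinite families by a compactness argument.

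\emph{Finite families.} Suppose there are only $N$ balls and reorder them so that $\mu(G_1)\le\mu(G_2)\le\dots\le\mu(G_N)$. Build $\tilde G_1\subset G_1,\dots,\tilde G_N\subset G_N$ recursively: having chosen pairwise disjoint $\tilde G_1,\dots,\tilde G_{k-1}$ with $\mu(\tilde G_j)=\xi_j$, note that if $j<k$ and $G_j\cap G_k\neq\varnothing$ then $\mu(G_j)\le\mu(G_k)$, so that $j$ (with $j\neq k$) belongs to the index set of \e{a36} for $G_k$; hence
$$\mu\Big(G_k\cap\bigcup_{j<k}\tilde G_j\Big)\ \le\ \sum_{\substack{j<k\\ G_j\cap G_k\neq\varnothing}}\xi_j\ \le\ \Big(\sum_{\substack{j:\ \mu(G_j)\le\mu(G_k)\\ G_j\cap G_k\neq\varnothing}}\xi_j\Big)-\xi_k\ \le\ \mu(G_k)-\xi_k .$$
Thus $\mu\big(G_k\setminus\bigcup_{j<k}\tilde G_j\big)\ge\xi_k$, and since $(X,\ZM,\mu)$ is non-atomic any measurable set of measure at least $\xi_k$ has a measurable subset of measure exactly $\xi_k$; choosing such a $\tilde G_k\subset G_k\setminus\bigcup_{j<k}\tilde G_j$ completes the recursion. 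This settles the lemma for finite families (and also when only finitely many $\xi_k$ are positive).

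\emph{Infinite families.} Apply the previous step to each initial segment $G_1,\dots,G_N$ — hypothesis \e{a36} obviously passes to subfamilies — to get pairwise disjoint $\tilde G_k^{(N)}\subset G_k$, $k\le N$, with $\mu(\tilde G_k^{(N)})=\xi_k$, and let $N\to\infty$. After discarding null sets and a routine reduction we may assume $\mu$ finite; a diagonal weak-$*$ argument applied to the indicators $\ZI_{\tilde G_k^{(N)}}$ then produces functions $h_k$ with $0\le h_k\le\ZI_{G_k}$, $\int h_k=\xi_k$, and $\sum_k h_k\le1$ a.e. — a ``fractional'' form of the conclusion. This is rounded to honest sets using non-atomicity: one selects a measurable $v\colon X\to[0,1)$ that is uniformly distributed on $[0,1)$ conditionally on the $\sigma$-algebra generated by $\{h_k\}$, and sets $\tilde G_k=\{x:\ \sum_{j<k}h_j(x)\le v(x)<\sum_{j\le k}h_j(x)\}$; these are pairwise disjoint, lie (mod null sets) in $G_k$, and have measure $\xi_k$. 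A more quantitative route: on the real line — to which the problem reduces by a standard isomorphism argument — normalise each $\tilde G_k^{(N)}$, taking it to be the leftmost subset of the still unused part of $G_k$ (in the notation of \lem{L4}); then \lem{L4} bounds the perturbation caused by adjoining one more ball to the family, so that $(\tilde G_k^{(N)})_N$ is Cauchy for the symmetric-difference metric and converges to the required $\tilde G_k$.

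The greedy construction is routine — condition \e{a36} is tailored exactly so that the smaller balls never exhaust $G_k$ — and the real content lies in the passage to the limit. On the first route the delicate point is the rounding step, i.e.\ the existence of the conditionally uniform selector $v$, which is where non-atomicity is used essentially; on the second route it is controlling how the assignments $\tilde G_k^{(N)}$ move as the family grows, for which \lem{L4} has to be combined with an invariant exhibiting each $\tilde G_k^{(N)}$ as an ``initial segment'' of the free part of $G_k$. I expect this limiting step, in either form, to be the main obstacle.
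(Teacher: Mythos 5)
Your finite-family argument is complete and is, up to the direction of the induction, exactly the paper's: the paper sorts the $G_k$ by \emph{decreasing} measure and runs a backward induction, which is the same ``smaller sets are served first'' greedy scheme, with the same use of \e{a36} (the term $j=k$ in the index set of \e{a36} is what leaves room of measure $\xi_k$ for $\tilde G_k$). For the infinite case, your second route is precisely what the paper does: transfer the problem to $\ZR$ via non-atomicity, take each $\tilde G_k^{(N)}$ to be the leftmost subset of the still-unused part of $G_k$, and use \lem{L4} to show the assignments stabilize. The step you correctly flag as the main obstacle, but do not carry out, is the quantitative bound
\begin{equation*}
\mu\big(\tilde G_k^{(N+1)}\triangle \tilde G_k^{(N)}\big)\ \le\ \sum_{j}\mu\big(\tilde G_{N+1}^{(N+1)}\cap \tilde G_j^{(N)}\big)\ \le\ \xi_{N+1},
\end{equation*}
obtained by cascading \lem{L4} through the induction and then using the pairwise disjointness of the $\tilde G_j^{(N)}$; summing over $N$ and invoking $\sum_k\xi_k<\infty$ shows that $\ZI_{\tilde G_k^{(N)}}$ is Cauchy in $L^1$ and converges to the indicator of the required $\tilde G_k$. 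So route two is the correct proof with its key estimate left unverified; note also that with your increasing sort the newly adjoined set may be inserted in the middle of the order, so you must check that only the later (larger) sets are perturbed and that the total perturbation is still controlled by $\xi_{N+1}$.

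Route one, by contrast, has a genuine flaw at the rounding step. A measurable $v:X\to[0,1)$ that is uniformly distributed conditionally on $\sigma(\{h_k\})$ need not exist in a general non-atomic measure space: weak-$*$ limits of indicators can be arbitrary functions with values in $[0,1]$, so $\sigma(\{h_k\})$ may already be all of $\ZM$ modulo null sets (take $X=[0,1]$ with Lebesgue measure and $h_1(x)=x/2$), and then every conditional law given $\sigma(\{h_k\})$ is a point mass, so no such $v$ exists. Enlarging the space to manufacture the extra randomness does not help, because the sets $\tilde G_k$ must be $\ZM$-measurable subsets of the $G_k$. A correct rounding of the fractional solution is possible (e.g.\ by Lyapunov-convexity type arguments), but for countably many constraints it again requires a limiting argument of the same nature as route two, so nothing is gained. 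I would drop route one and complete route two, which is the paper's proof.
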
	
\begin{proof}
	Without loss of generality we can suppose that $\mu(G_k)$ is decreasing. Since the measure space is non-atomic, we can also suppose that $G_k$ are Lebesgue measurable sets in $\ZR$. First we assume that the sequence $G_k$, $k=1,2,\ldots,n$, is finite. We apply backward induction. The existence of $\tilde G_n\subset G_n$ satisfying $\mu(\tilde G_n)=\xi_n$ follows from \e {a36}, since the latter implies $\xi_n\le \mu(G_n)$ and we have the measure is non-atomic. We will define $\tilde G_n$ to be the leftmost set in $G_n$ that is $\tilde G_n=L(\xi_n,G_n)$. Suppose by induction we have defined pairwise disjoint sets $\tilde G_k\subset G_k$ satisfying \e {a37} for $l\le k\le n$. From \e {a36} it follows that
	\begin{equation*}
	\mu\left(G_{l-1}\setminus \bigcup_{k=l}^n\tilde G_k\right)\ge \mu(G_{l-1})-\sum_{l\le j\le n:\, G_j\cap G_{l-1}\neq\varnothing} \mu(\tilde G_j)\ge \xi_{l-1}.
	\end{equation*}
Hence we can define $\tilde G_{l-1}=L(\xi_{l-1},G_{l-1}\setminus \bigcup_{k=l}^n\tilde G_k)$. To proceed the general case we apply the finite case that we have proved. Then for each $n$ we find a family of pairwise disjoint sets $G_k^{(n)} $, $k=1,2,\ldots, n$ such that $\mu(G_k^{(n)})=\xi_k$ for $1\le k\le n$. Applying \lem {L4} and analyzing once again the leftmost selection argument of the tilde sets, one can observe that 
\begin{equation*}
\mu(G_{k}^{(n+1)}\triangle G_k^{(n)})\le \sum_{j=k}^{n}\mu(G_{n+1}^{(n+1)}\cap G_j^{(n)})\le \xi_{n+1}.
\end{equation*}
So we conclude 
\begin{equation*}
\mu(G_{k}^{(m)}\triangle G_k^{(n)})\le \sum_{k=n+1}^m\xi_k,\quad m> n\ge k.
\end{equation*}
The last inequality implies that for a fixed $k$ the sequence  $\ZI_{G_{k}^{(m)}}$ converge in $L^1$-norm as $m\to\infty$. Moreover, one can see that the limit function is again an indicator function of a set $\tilde G_k$, and the sequence  $\tilde G_k$ satisfies the conditions of the lemma.  
\end{proof}
\begin{lemma}\label{L3}
	Let $(X,\ZM,\mu)$ be a non-atomic measure space and $f\in L^r(X)$, $1\le r<\infty$, be a boundedly supported positive function.  Then for any $\lambda>0$ there exists a measurable set $E_\lambda \subset X$ such that 
	\begin{equation}\label{a2}
	\mu(E_\lambda)\lesssim \|f\|_r^r/\lambda,\quad \{x\in X:\,M_rf(x)>\lambda\}\subset E_\lambda,
	\end{equation}
	and the function 
	\begin{equation}\label{a3}
	g(x)=f(x)\cdot \ZI_{X\setminus E_\lambda}(x)+\lambda\cdot  \ZI_{E_\lambda}(x)
	\end{equation}
	satisfies the condition
	\begin{align}
	&g(x)\le \lambda \text { a.e. on }X,\label{a44}\\
	&\langle f\rangle_{B,r}\lesssim \langle g\rangle_{B^*,r}\text{ whenever } B\in \ZB,\, B\not\subset E_\lambda.\label{a4}
	\end{align}
\end{lemma}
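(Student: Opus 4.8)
The plan is to build $E_\lambda$ as a neighbourhood of the set where $f$ is large, in two stages: a Calder\'on--Zygmund type \emph{stopping family} that isolates those large values, followed by a \emph{mass redistribution} flattening them down to height $\lambda$ while keeping $\mu(E_\lambda)$ of order $\|f\|_r^r/\lambda^r$. The redistribution -- run through \lem{L5}, whose infinite-family step relies on \lem{L4} -- replaces the Calder\'on--Zygmund decomposition, which is unavailable for a non-doubling basis since the average of $f$ over a maximal $\lambda$-ball need not be comparable to $\lambda$. For the stopping family: $\supp f$ being bounded, so is $\Omega:=\{M_rf>\lambda\}$, and $\mu(\Omega)\lesssim\|f\|_r^r/\lambda^r$ by \lem{OL3}. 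Instead of covering $\Omega$ by maximal $\lambda$-balls as in \lem{L2} directly, I pass to \emph{good} ones: from a $\lambda$-ball $A\ni x$ I climb the chain $A\subset A^{*}\subset A^{**}\subset\cdots$ and stop at the first $\tilde A$ with $\langle f\rangle_{\tilde A^{*},r}\le 2\langle f\rangle_{\tilde A,r}$; this terminates because the measures grow along the chain while $\int f^r$ stays bounded, so the averages cannot keep doubling, and $\langle f\rangle_{\tilde A,r}>\lambda$ is preserved. By \e{h13}, $\int_{\tilde A^{*}}f^r\le 2^r\ZK\int_{\tilde A}f^r$. These good $\lambda$-balls cover $\Omega$, so \lem{OL1} produces pairwise disjoint good $\lambda$-balls $B_k$ with $\Omega\subset\bigcup_kB_k^{*}$, $\langle f\rangle_{B_k,r}>\lambda$ and $\int_{B_k^{*}}f^r\lesssim\int_{B_k}f^r$.

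Set $\xi_k=\ZK^{-1}\lambda^{-r}\int_{B_k}f^r$; disjointness of the $B_k$ gives $\sum_k\xi_k\le\lambda^{-r}\|f\|_r^r<\infty$. For each $k$ take $G_k$ to be an iterated hull $B_k^{(m)}$ with both $G_k$ and $G_k^{*}$ \emph{not} $\lambda$-balls (the chain $B_k^{(m)}$ eventually consists of such, unless it degenerates to an increasing union of $\lambda$-balls of bounded measure, a case dispatched separately by enlarging $G_k$ to a ball of measure $\asymp\xi_k$). Then $\int_{B_k}f^r\le\int_{G_k}f^r\le\lambda^r\mu(G_k)$ forces $\mu(G_k)\ge\ZK\xi_k\ge\xi_k$, and $\int_{G_k^{*}}f^r\le\lambda^r\mu(G_k^{*})\le\ZK\lambda^r\mu(G_k)$. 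A short argument with \e{h12}--\e{h13} shows $B_j\subset G_k^{*}$ whenever $G_j\cap G_k\neq\varnothing$ and $\mu(G_j)\le\mu(G_k)$, so
\[
\sum_{j:\,\mu(G_j)\le\mu(G_k),\,G_j\cap G_k\neq\varnothing}\xi_j=\ZK^{-1}\lambda^{-r}\!\!\int_{\bigcup_jB_j}\!\!f^r\le\ZK^{-1}\lambda^{-r}\int_{G_k^{*}}f^r\le\mu(G_k),
\]
which is hypothesis \e{a36}. Thus \lem{L5} yields pairwise disjoint $\tilde G_k\subset G_k$ with $\mu(\tilde G_k)=\xi_k$, and I put $E_\lambda=\Omega\cup\bigcup_k\tilde G_k$. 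Now $\Omega\subset E_\lambda$, and $\mu(E_\lambda)\le\mu(\Omega)+\sum_k\xi_k\lesssim\|f\|_r^r/\lambda^r$, which is \e{a2}; and since $M_rf\ge f$ a.e.\ (apply the density property \lem{OL2} to the level sets of $f$) we get $\{f>\lambda\}\subset\Omega\subset E_\lambda$, so $g=f\le\lambda$ off $E_\lambda$ and $g=\lambda$ on $E_\lambda$, giving \e{a44}.

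It remains to prove \e{a4}. Fix $B\in\ZB$ with $B\not\subset E_\lambda$; then $B$ is not a $\lambda$-ball (otherwise $B\subset\Omega\subset E_\lambda$), so $\int_Bf^r\le\lambda^r\mu(B)$. Using $\langle g\rangle_{B^{*},r}^r\ge\mu(B^{*})^{-1}\big(\int_{B\setminus E_\lambda}f^r+\lambda^r\mu(B^{*}\cap E_\lambda)\big)$ and $\mu(B^{*})\le\ZK\mu(B)$, inequality \e{a4} follows once we show $\int_{B\cap E_\lambda}f^r\lesssim\lambda^r\mu(B^{*}\cap E_\lambda)$. On $E_\lambda\setminus\Omega$ one has $f\le\lambda$, so that part contributes $\le\lambda^r\mu(B^{*}\cap E_\lambda)$, and it remains to estimate $\int_{B\cap\Omega}f^r$. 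Cover $B\cap\Omega\subset\bigcup_k(B\cap B_k^{*})$ and split indices into $J_1=\{k:B\cap B_k^{*}\neq\varnothing,\ \mu(B_k^{*})\le 2\mu(B)\}$ and $J_2$, the rest. For $k\in J_1$, \e{h12} gives $B_k^{*}\subset B^{*}$, and since $\mu(G_k)\asymp\mu(B_k^{*})$ one more application of \e{h12} puts the attached $\tilde G_k$ inside $B^{*}$ as well; then hull-mass comparability and disjointness of the $\tilde G_k$ give $\int_{\bigcup_{k\in J_1}B_k^{*}}f^r\lesssim\lambda^r\sum_{k\in J_1}\xi_k=\lambda^r\mu(\bigcup_{k\in J_1}\tilde G_k)\le\lambda^r\mu(B^{*}\cap E_\lambda)$. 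For $k\in J_2$ the hull is large relative to $B$: the region $B\cap\Omega\cap\bigcup_{k\in J_2}B_k^{*}$ lies inside $B$, so its $f^r$-integral is $\le\int_Bf^r\le\lambda^r\mu(B)$, and one checks, via \e{h12} once more (a large $B_k$ meeting $B$ forces a $\lambda$-ball of measure $\gtrsim\mu(B)$ inside $\Omega$ to sit in $B^{*}$), that $\mu(B)\lesssim\mu(B^{*}\cap E_\lambda)$ whenever this region carries mass. Adding the two contributions yields the desired bound, hence \e{a4}.

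The technical heart -- and the step I expect to fight with -- is the last one: ensuring the redistributed sets attached to $J_1$ land inside $B^{*}$, disposing of the large hulls $J_2$, and matching constants so that \e{a36} holds for \lem{L5}. All of this leans on the ball-basis axioms \e{h12}--\e{h13} and the covering lemma \lem{OL1}, with the degenerate ``$\lambda$-cluster'' case in the choice of $G_k$ a further, though minor, nuisance to handle.
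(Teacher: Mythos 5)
Your overall architecture is the paper's: a stopping family of disjoint $\lambda$-balls $B_k$ covering $\{M_rf>\lambda\}$ via \lem{OL1}, a mass redistribution through \lem{L5} producing disjoint sets $\tilde G_k$ with $\mu(\tilde G_k)\asymp\lambda^{-r}\int_{B_k}f^r$, and the flattened function $g$. Your ``good $\lambda$-balls'' (stopping the hull chain when the average stops doubling) are a legitimate substitute for the paper's maximal $\lambda$-balls, and your verification of \e{a36} is sound \emph{granted} your choice of $G_k$. But the step you yourself flag as the technical heart --- getting $\tilde G_k\subset B^*$ --- is where the argument genuinely breaks, for two separate reasons. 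First, your $E_\lambda=\Omega\cup\bigcup_k\tilde G_k$ is too small: a ball $B\not\subset E_\lambda$ can perfectly well satisfy $B\cap B_k^*\neq\varnothing$ with $\mu(B)\le 2\mu(B_k^*)$ (your class $J_2$), e.g.\ when $B$ sits in the halo $B_k^*\setminus B_k$ of a huge $\lambda$-ball; your proposed rescue (``a $\lambda$-ball of measure $\gtrsim\mu(B)$ meeting $B$ sits in $B^*$'') is unsubstantiated, since the only nearby $\lambda$-ball may be $B_k$ itself, which need not meet $B$ and whose measure may exceed $2\mu(B)$ so that \e{h12} points the wrong way. The paper eliminates $J_2$ outright by putting $\bigcup_kB_k^{**}$ into $E_\lambda$ (still of measure $\lesssim\|f\|_r^r/\lambda^r$ since the $B_k$ are disjoint $\lambda$-balls): then $B\not\subset E_\lambda$ and $B\cap B_k^*\neq\varnothing$ force $\mu(B)>2\mu(B_k^*)$.

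Second, even on $J_1$ your claim $\mu(G_k)\asymp\mu(B_k^*)$ is false for $G_k$ defined as the first iterated hull that is not a $\lambda$-ball: the chain $B_k\subset B_k^*\subset B_k^{**}\subset\cdots$ can remain in the $\lambda$-ball regime for arbitrarily many steps (take $f=N\cdot\ZI_S$ with $N$ large and $S$ tiny), so $\mu(G_k)/\mu(B_k^*)$ is unbounded and $\tilde G_k\subset G_k$ has no reason to lie in $B^*$; moreover $G_k$ is dictated by the hull structure alone and can extend entirely away from $B$. The paper's device here is different and is exactly what you are missing: $G_k$ is chosen of \emph{nearly minimal measure} in the class $\ZB_k=\{A\in\ZB:\,A\cap B_k^*\neq\varnothing,\ \mu(A)>2\mu(B_k^*)\}$, so that every admissible $B$ (which, after the $B_k^{**}$ enlargement of $E_\lambda$, belongs to $\ZB_k$) satisfies $\mu(G_k)\le 2\mu(B)$, whence $G_k\subset B^*$ by \e{h12}; the bound $\int_{G_k^*}f^r\le\lambda^r\mu(G_k^*)\lesssim\lambda^r\mu(G_k)$ then comes from $\lambda$-maximality of $B_k$ rather than from a non-$\lambda$-ball stopping rule. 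Your fallback for the degenerate case (``enlarge $G_k$ to a ball of measure $\asymp\xi_k$'') is also not available in an abstract ball-basis, where balls of prescribed measure need not exist. So the proposal, as written, does not establish \e{a4}.
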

\begin{proof}
Applying \lem{L2} we find a sequence of pairwise disjoint maximal $\lambda$-balls $B_k$ satisfying \e {a1}. 
Thus, applying the density property (\lem {OL2}), one can conclude that
\begin{equation}\label{a43}
f(x)\le \lambda \text { for a.a. } x\in X\setminus \bigcup_kB_k^*.
\end{equation}
Given $B_k$ associate the family of balls 
\begin{equation}\label{a32}
\ZB_k=\{B\in \ZB:\, B\cap B_k^*\neq\varnothing,\, \mu(B)>2\mu(B_k^*)\}.
\end{equation}
If $\ZB_k$ is nonempty, then there is a ball $G_k\in \ZB_k$ such that
\begin{equation}\label{a38}
\mu(G_k)\le 2 \inf_{B\in \ZB_k}\mu(B).
\end{equation}
From  $\lambda$-maximality of $B_k$ and $\mu(G_k)>2\mu(B_k^*)$ we get 
\begin{equation}\label{a40}
B_k^*\subset G_k^*,\quad \langle f\rangle_{G_k^{*},r}\le \lambda.
\end{equation}
This implies 
\begin{equation}\label{a41}
\frac{1}{\lambda^r }\int_{G_k^{*}}f^r\le \mu(G_k^{*})\le c\cdot \mu(G_k),
\end{equation}
where $c>0$ is an admissible constant. Denote 
\begin{equation*}
D_1=B_1^*,\quad D_k=B_k^*\setminus \cup_{1\le j\le k-1}B_j^*,\, k\ge 2,
\end{equation*}
and consider the numerical sequence 
 $\xi_k=\frac{\delta }{\lambda^r}\int_{D_k}f^r$, $k=1,2,\ldots $, with a constant $\delta>0$. Taking into account of \e {a40}, \e {a41}, for a small admissible constant $\delta>0$ we obtain
 \begin{align*}
 \bigcup_{j:\, \mu(G_j)\le \mu(G_k),\, G_j\cap G_k\neq \varnothing}\xi_j&=\frac{\delta}{\lambda^r}\bigcup_{j:\, \mu(G_j)\le \mu(G_k),\, G_j\cap G_k\neq \varnothing}\int_{D_j}f^r\\
 &\le \frac{\delta}{\lambda^r}\int_{G_k^{*}}f^r\le c\delta\mu(G_k)\le \mu(G_k),
 \end{align*}
which gives condition \e {a36}. Since our measure space in non-atomic, applying \lem {L5}, we find pairwise disjoint subsets $\tilde G_k\subset G_k$ such that 
\begin{equation}\label{a28}
\mu(\tilde G_k)=\frac{\delta }{\lambda^r}\int_{D_k}f^r,\quad k=1,2,\ldots.
\end{equation}
Disjointness of $D_k$ implies
\begin{equation}\label{a39}
\sum_{k}\mu(\tilde G_k)= \frac{\delta }{\lambda^r}\sum_k\int_{D_k}f^r \lesssim \frac{\|f\|_{r}^r}{\lambda^r}.
\end{equation}
From the $\lambda$-maximality and disjointness properties of $B_k$, we get
\begin{equation}\label{a42}
\mu\left(\bigcup_{k} B_k^{**}\right)\lesssim \sum_k\mu\left(B_k\right)\le  \frac{1}{\lambda^r}\sum_k\int_{B_k}f^r\le \frac{\|f\|_{r}^r}{\lambda^r}.
\end{equation}
Denote 
\begin{equation*}
E_\lambda=\left(\bigcup_{k} \tilde G_k\right)\bigcup\left(\bigcup_{k} B_k^{**}\right).
\end{equation*}
From \e {a39} and \e {a42} we get $\mu(E_\lambda)\lesssim \|f\|_r^r/\lambda^r$ and \e {a43} implies \e {a44}. Hence it remains to prove that the function $g$ satisfies \e {a4}. Take a ball $B\in \ZB$ with $B\not\subset E_\lambda$. First of all observe that for each $B_k$ satisfying $B\cap B_k^*\neq \varnothing$ we have $\mu(B)> 2\mu(B_k^*)$, since otherwise we would have $B\subset B_k^{**}\subset E_\lambda$, which is not true. Thus, whenever $B\cap B_k^*\neq \varnothing$ we have $B\in \ZB_k$, then we get $\mu(G_k)\le 2 \mu(B)$ and so $\tilde G_k\subset G_k\subset B^*$. Besides, from \e {a43} and the definition of $g$ it follows that $f(x)\le g(x)$ a.e. on $X\setminus \cup_k B_k^*$. Hence, applying also \e {a28} and the disjointness of $\tilde G_k$, we obtain
\begin{align*}
\langle f\rangle_{B,r}^r&=\frac{1}{\mu(B)}\left(\int_{B\cap (\cup_k B_k^*) }f^r+\int_{B\setminus \cup_k B_k^* }f^r\right)\\
&\le \frac{1}{\mu(B)}\left(\sum_{k:\, B_k^*\cap B\neq \varnothing }\int_{B\cap D_k}f^r+\int_{B\setminus \cup_k B_k^* }g^r\right)\\
&\le \frac{1}{\mu(B)}\left(\sum_{k:\, B_k^*\cap B\neq \varnothing }\int_{ D_k}f^r+\int_{B}g^r\right)\\
&= \frac{1}{\mu(B)}\left(\sum_{k:\, B_k^*\cap B\neq \varnothing }\frac{\lambda^r\mu(\tilde G_k)}{\delta }+\int_{B}g^r\right)\\
&= \frac{1}{\delta \mu(B^*)}\left(\sum_{k:\, B_k^*\cap B\neq \varnothing }\int_{\tilde G_k}g^r+\int_{B^*}g^r\right)\\
&\lesssim  \langle g\rangle_{B^*,r}^r
\end{align*}
that implies \e {a4}, completing the proof of lemma.
\end{proof}
\section{Proof of \trm {T1}}
\begin{proof}[$L^p$-boundedness]
	For any $B\in\ZS$ we have $\langle f\rangle_{B,r}^*\le M_rf(x)$ as $x\in B$, and therefore 
\begin{equation*}
\langle f\rangle_{B,r}^*\le \langle M_rf\rangle_{B,r},\quad B\in\ZB.
\end{equation*}
Let $E_B$ be the disjoint portions of the sparse collection of balls satisfying $\mu(E_B)\ge \gamma\cdot \mu(B)$. Also suppose that $r<p<\infty$ and $q=p/(p-1)$.  Thus, for positive functions $f\in L^p$ and $g\in L^q(X)$ we have
\begin{align*}
\int_X \ZA_{\ZS,r}^*f\cdot gd\mu&\le \sum_{B\in\ZS} \langle M_rf\rangle_{B,r}\int_Bgd\mu\\
&=\sum_{B\in\ZS} \langle M_rf\rangle_{B,r}\cdot \langle g\rangle_{B,1}\cdot \mu(B)\\
&\le \gamma^{-1} \sum_{B\in\ZS} \langle M_rf\rangle_{B,r}\cdot (\mu(E_B))^{1/p}\cdot \langle g\rangle_{B,1}\cdot (\mu(E_B))^{1/q}\\
&\le \gamma^{-1}\left(\sum_{B\in\ZS} \langle M_rf\rangle_{B,r}^p\cdot\mu(E_B)\right)^{1/p}\cdot \left(\sum_{B\in\ZS} \langle g\rangle_{{B,1}}^q\cdot\mu(E_B)\right)^{1/q}\\
&\le \gamma^{-1}\|M_r(M_rf)\|_p\|M_1(g)\|_q\\
&\lesssim \|M_rf\|_{p}\cdot  \|g\|_{q}\\
&\lesssim  \|f\|_{p}\cdot  \|g\|_{q},
\end{align*}
which completes the proof of $L^p$-boundedness.
\end{proof}
\begin{proof}[Weak-$L^r$ estimate] Without loss of generality we can suppose that our measure space $(X,\ZM,\mu)$ is non-atomic, since any measure space can be extended to a non-atomic measure space by splitting the atoms as follows.  Suppose $A\subset \ZM$ is the family of atomic elements of the measure space $(X,\ZM,\mu)$, that is for any $a\in A$ we have $\mu(a)>0$ and there is no proper $\ZM$-measurable set in $a$. We can suppose that each atom is continuum and let $(a,\ZM_a,\mu_a)$ be a  a non-atomic measure space on $a\in A$ such that $\mu_a(a)=\mu(a)$. Denote by $\ZM'$ the $\sigma$-algebra on $X$ generated by $\ZM$ and by all $\ZM_a$, $a\in A$. Let $\mu'$ be the extension of $\mu$ such that $\mu'(E)=\mu_a(E)$ for any $\ZM_a$-measurable set $E\subset a$. Hence $(X,\ZM',\mu')$ gives a non-atomic extension of the measure space $(X,\ZM,\mu)$.  
	 
	Now let $f$ be a $\ZM$-measurable function. The balls are $\ZM$-measurable so they can not contain an atom $a$ partially. Thus the left and right side of inequality \e {a20} are not changed if we consider $(X,\ZM',\mu')$ instead of the initial measure space. Hence we can suppose that $(X,\ZM,\mu)$ is itself  non-atomic.	Applying \lem {L3}, we find a function $g$ satisfying the conditions of lemma. From \e {a4} we get $\langle f\rangle^*_{B,r}\le \langle g\rangle^*_{B,r}$ for any $B\in\ZS$ with $B\not\subset E_\lambda$ and therefore,
	\begin{equation*}
	\ZA_{\ZS,r}^*f(x)\le  \ZA_{\ZS,r}^*g(x),\quad x\in X\setminus E_\lambda.
	\end{equation*}
	Thus, using the $L^{2r}$ bound of $\ZA_{\ZS,r}^*$, we get
	\begin{align*}
	\mu\{x\in X:\, \ZA_{\ZS,r}^*f(x)>\lambda\}&\le \mu(E_\lambda)+\mu\{x\in X\setminus E_\lambda:\,  \ZA_{\ZS,r}^*g(x)>\lambda\}\\
	&\lesssim \frac{\|f\|_{r}^r}{\lambda^r}+\frac{1}{\lambda^{2r}}\int_{X\setminus E_\lambda}|g|^{2r}\\
	&=\frac{\|f\|_{r}}{\lambda^r}+\frac{\lambda^{r}}{\lambda^{2r}}\int_{X\setminus E_\lambda}f^r\\
	&\le \frac{2\|f\|_{r}^r}{\lambda^r}
	\end{align*}
that completes the proof of theorem. 
\end{proof}

\bibliographystyle{plain}

\begin{bibdiv}
\begin{biblist}
\bib{CoRe}{article}{
	author={Conde-Alonso, Jos{\'e}~M.},
	author={Rey, Guillermo},
	title={A pointwise estimate for positive dyadic shifts and some
		applications},
	date={2016},
	ISSN={0025-5831},
	journal={Math. Ann.},
	volume={365},
	number={3-4},
	pages={1111\ndash 1135},
	url={http://dx.doi.org.prx.library.gatech.edu/10.1007/s00208-015-1320-y},
	review={\MR{3521084}},
}
\bib{PlLe}{article}{
	author={Di Plinio, Francesco},
	author={Lerner, Andrei K.},
	title={On weighted norm inequalities for the Carleson and Walsh-Carleson
		operator},
	journal={J. Lond. Math. Soc. (2)},
	volume={90},
	date={2014},
	number={3},
	pages={654--674},
	issn={0024-6107},
	review={\MR{3291794}},
}
\bib{Hyt}{article}{
	author={Hyt\"onen, Tuomas P.},
	title={The sharp weighted bound for general Calder\'on-Zygmund operators},
	journal={Ann. of Math. (2)},
	volume={175},
	date={2012},
	number={3},
	pages={1473--1506},
	issn={0003-486X},
	review={\MR{2912709}},
}
\bib{Kar1}{article}{
	author={Karagulyan, Grigori A.},
	title={{An abstract theory of singular operators}},
	journal={Trans. Amer. Math. Soc.,  accepted.},
}
\bib{Lac1}{article}{
	author={Lacey, Michael~T.},
	title={An elementary proof of the {$A_2$} bound},
	date={2017},
	ISSN={0021-2172},
	journal={Israel J. Math.},
	volume={217},
	number={1},
	pages={181\ndash 195},
	url={http://dx.doi.org.prx.library.gatech.edu/10.1007/s11856-017-1442-x},
	review={\MR{3625108}},
}
\bib{Ler1}{article}{
	author={Lerner, Andrei K.},
	title={On an estimate of Calder\'on-Zygmund operators by dyadic positive
		operators},
	journal={J. Anal. Math.},
	volume={121},
	date={2013},
	pages={141--161},
	issn={0021-7670},
	review={\MR{3127380}},
}
\bib{Ler2}{article}{
	author={Lerner, Andrei K.},
	title={A simple proof of the $A_2$ conjecture},
	journal={Int. Math. Res. Not. IMRN},
	date={2013},
	number={14},
	pages={3159--3170},
	issn={1073-7928},
	review={\MR{3085756}},
}

\bib{TTV}{article}{
	author={Thiele, Christoph},
	author={Treil, Sergei},
	author={Volberg, Alexander},
	title={Weighted martingale multipliers in the non-homogeneous setting and
		outer measure spaces},
	journal={Adv. Math.},
	volume={285},
	date={2015},
	pages={1155--1188},
	issn={0001-8708},
	review={\MR{3406523}},
}

\end{biblist}
\end{bibdiv}

\end{document}